\theoremstyle{plain}
\newtheorem{thm}{Theorem}
\newtheorem{lem}{Lemma}
\newtheorem{cor}{Corollary}
\theoremstyle{definition}
\newtheorem{defn}{Definition}
\begin{document}

\begin{center}\Large
\textbf{Formations of Finite Groups in Polynomial Time:\\
the $\mathfrak{F}$-Radical}\normalsize

\smallskip
Viachaslau I. Murashka

 \{mvimath@yandex.ru\}

Faculty of Mathematics and Technologies of Programming,
Francisk Skorina Gomel State University, Sovetskaya 104, Gomel,
246028, Belarus\end{center}

 \begin{abstract}
  For a Baer-local (composition) Fitting formation   $\mathfrak{F}$ the polynomial time algorithm for the computation of the $\mathfrak{F}$-radical of a permutation group  is suggested. In particular it is showed how one can compute the $\mathfrak{F}$-radical in case when $\mathfrak{F}$ is a primitive saturated formation of soluble groups.  Moreover, the polynomial time algorithms for the computation of different lengthes associated with a group are presented.
 \end{abstract} 
 
 \textbf{Keywords.} Finite group; permutation group computation;
   Baer-local formation; Fitting formation; $\mathfrak{F}$-radical; polynomial time algorithm.

\textbf{AMS}(2010). 20D10, 20B40.

\section{Introduction and the Main Result}

All groups considered here are finite. Fitting showed that the product of two normal nilpotent subgroups is again nilpotent, i.e. in every group there exists the greatest normal nilpotent subgroup which is called the Fitting subgroup.
Recall that a class of groups is a collection $\mathfrak{X}$ of groups with the property that
if $G\in\mathfrak{X} $ and if $H \simeq  G$, then $H \in \mathfrak{X}$. The greatest  normal $\mathfrak{X}$-subgroup of a group $G$ is called the $\mathfrak{X}$-radical and is denoted by $G_\mathfrak{X}$. It always exists in case when $\mathfrak{X}$ is a Fitting class.

The algorithm for computing the $\mathfrak{X}$-radical   (of a soluble group) was presented in \cite{HOFLING2001}. Note that as was mentioned in \cite{HOFLING2001} suggested there algorithm  (even when  $\mathfrak{X}$ is the class of all nilpotent groups)   for a permutation group of degree $3n$   may require to check for nilpotency $2^n$ subgroups. The main idea of that algorithm was to extended the $\mathfrak{X}$-radical from the given member of chief series to the next one. In this paper we suggest an algorithm for the computation of the $\mathfrak{X}$-radical (which runs in polynomial time for permutation groups) using a different approach.

Recall that the Fitting subgroup $\mathrm{F}(G)$, the $p$-nilpotent radical $\mathrm{O}_{p',p}(G)$, the soluble radical $R(G)$ of a group $G$ are just the intersection of centralizers of all, all divisible by $p$ and all non-abelian chief factors respectively. Also note that the generalized Fitting subgroup (the quasinilpotent radical) $\mathrm{F}^*(G)$ is the intersection of   innerizers of all chief factors of $G$. The first three radicals are associated with local Fitting formations and the last one is associated with a Baer-local (composition) Fitting formation. Shemetkov \cite{Shemetkov1988} obtained similar characterizations of the $\mathfrak{F}$-radical for a Baer-local Fitting formation $\mathfrak{F}$ using the generalization of the centralizer of chief factor. We can not use the results from \cite{Shemetkov1988} directly for two reasons. Formations $\mathfrak{F}$ in \cite{Shemetkov1988} are defined with the inner Baer-local function, i.e. $f(H/K)\subseteq \mathfrak{F}$ for all chief factors $H/K$. In some applications (such as primitive saturated formations) the functions will not necessary satisfy this condition. Also the constructive description of the generalized centralizer was not presented in \cite{Shemetkov1988}.

Let $f$ be a function which assigns to every simple group $J$ a possibly empty formation $f(J)$. Now extend the domain of $f$. If $G$ is the direct product of simple groups isomorphic to $J$, then we say that $G$ has type $J$ and let $f(G)=f(J)$. If $J$ is a cyclic group of order $p$, then let $f(p)=f(J)$. Such functions $f$ are called  Baer functions.  A formation $\mathfrak{F}$ is called Baer-local (or composition, see \cite{Shemetkov1988} or \cite[p. 4]{Guo2015}) if for some Baer function $f$
$$\mathfrak{F}=(G\mid G/C_G(H/K)\in f(H/K)\textrm{ for every chief factor }H/K\textrm{ of }G).$$
The main result of this paper is

\begin{thm}\label{thm2}
  Let $\mathfrak{F}$ be a Baer-local Fitting formation defined by $f$ such that $f(J)$ is a Fitting formation for any simple group $J$.
  Assume that $(G/K)_{f(J)}$ can be computed in polynomial time for every  $K\trianglelefteq G\leq S_n$ and a simple group $J$. Then $(G/K)_\mathfrak{F}$ can be computed in polynomial time for every  $K\trianglelefteq G\leq S_n$. 
\end{thm}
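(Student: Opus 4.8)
The plan is to reduce the computation of the radical to a single structural formula and then realise every ingredient of that formula by a known polynomial-time routine. Concretely, I will show that for a fixed chief series of $G$,
$$G_\mathfrak{F}=\bigcap_{H/K}\mathrm{Z}_f(H/K),$$
where $H/K$ runs over the chief factors of the series, $J$ denotes the composition type of $H/K$, and $\mathrm{Z}_f(H/K)$ is the \emph{generalized $f$-centralizer}, i.e. the normal subgroup of $G$ containing $C_G(H/K)$ determined by $\mathrm{Z}_f(H/K)/C_G(H/K)=\bigl(G/C_G(H/K)\bigr)_{f(J)}$. This specialises to $\mathrm{F}(G)=\bigcap C_G(H/K)$ when $f(J)=(1)$ and to the other radicals listed in the introduction, which is a useful sanity check. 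For the quotient version one applies the same formula inside $G/K$, noting that $C_{G/K}(H/L)=C_G(H/L)/K$ and $(G/K)/C_{G/K}(H/L)=G/C_G(H/L)$, so that every object that must be computed already has the shape required by the hypothesis, with $C_G(H/L)\trianglelefteq G\le S_n$.

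First I would prove $G_\mathfrak{F}\le\mathrm{Z}_f(H/K)$ for \emph{every} chief factor $H/K$. Writing $R=G_\mathfrak{F}$ and $C=C_G(H/K)$, this amounts to $RC/C\cong R/C_R(H/K)\in f(J)$. If $R$ avoids $H/K$ (that is $R\cap H\le K$), then $RK/K$ and $H/K$ are normal subgroups of $G/K$ meeting trivially, hence commute, so $R$ centralises $H/K$ and $RC/C=1\in f(J)$. If $R$ covers $H/K$, then $D=(R\cap H)/(R\cap K)\cong_G H/K$; by Clifford's theorem $D$ restricts to a semisimple $R$-module when $H/K$ is abelian, and is a direct product of $R$-chief factors $D_1,\dots,D_t$ when $H/K$ is nonabelian. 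In both cases the elements of $R$ trivial on every $D_i$ already act trivially on $D$: in the nonabelian case $D=\prod_i D_i$ makes this immediate, while in the abelian case any element trivial on all composition factors generates, with its $R$-conjugates, a normal unipotent subgroup, i.e. a $p$-group acting semisimply, which must act trivially. Hence $C_R(D)=\bigcap_i C_R(D_i)$, and since $R\in\mathfrak{F}$ gives $R/C_R(D_i)\in f(J)$ for each $i$, the $\mathrm{R}_0$-closure of the formation $f(J)$ yields $R/C_R(D)\in f(J)$, as required.

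Next I would prove that $W:=\bigcap_{H/K}\mathrm{Z}_f(H/K)\in\mathfrak{F}$, whence $W\le G_\mathfrak{F}$ and the two inclusions give the formula. Refining a $G$-chief series of the normal subgroup $W$ to a $W$-chief series shows that every chief factor $A/B$ of $W$ is a $W$-subquotient of some $G$-chief factor $H/K$ with $H\le W$ and of the same type $J$; since $C_W(H/K)\le C_W(A/B)$ and $f(J)$ is quotient-closed, it suffices to check $W/C_W(H/K)\in f(J)$ for these $H/K$. Here $W\le\mathrm{Z}_f(H/K)$ gives $WC/C\le(G/C)_{f(J)}$, and $WC/C$ is normal in the $f(J)$-group $(G/C)_{f(J)}$; because $f(J)$ is a \emph{Fitting} formation it is closed under normal subgroups, so $WC/C\cong W/C_W(H/K)\in f(J)$. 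This is exactly where both Fitting hypotheses (on $\mathfrak{F}$ and on each $f(J)$) are essential.

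Finally, the algorithm assembles standard polynomial-time permutation-group routines: compute a chief series of $G$ through $K$ (so the factors above $K$ are the chief factors of $G/K$); for each factor $H/L$ recognise its type $J$ and compute the centralizer $C_G(H/L)$ as the kernel of the action on the factor; invoke the hypothesis to obtain $\bigl(G/C_G(H/L)\bigr)_{f(J)}$; pull this back to the subgroup $\mathrm{Z}_f(H/L)\le G$; and intersect the $\mathrm{Z}_f(H/L)$ over the $O(\log|G|)$ factors. Each step is polynomial and the number of factors is polynomial, giving the result. The main obstacle is not the bookkeeping but the structural equivalence underpinning the formula — the Clifford-theoretic identity $C_R(D)=\bigcap_i C_R(D_i)$ and the reduction of $W$-chief factors to computable $G$-chief factors via closure under normal subgroups — since this is what lets the externally visible $G$-chief data decide membership of the internally defined radical. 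Once the formula is in place, correctness and the running time follow routinely from the hypothesis.
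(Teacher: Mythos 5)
Your two inclusions are, in substance, sound and correspond closely to the paper's own steps: your first paragraph is essentially the paper's Lemma \ref{center1} ($G_\mathfrak{F}\leq C_{G,f}(H/K)$), and your second paragraph is the membership argument inside the paper's Theorem \ref{int}. The genuine gap is that your master formula $G_\mathfrak{F}=\bigcap_{H/K}\mathrm{Z}_f(H/K)$ silently assumes $f(J)\neq\emptyset$ for the type $J$ of \emph{every} chief factor, since otherwise $\bigl(G/C_G(H/K)\bigr)_{f(J)}$ does not exist. The hypotheses of the theorem do not grant this: the empty class is (vacuously) a Fitting formation, the paper explicitly remarks right after the theorem that $G_{f(J)}=\emptyset$ when $f(J)=\emptyset$, and in the intended applications empty values are unavoidable. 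For instance, for the formation of groups of order one (a member of the family $\mathcal{F}_0$ in the applications), \emph{every} defining Baer function has $f(p)=\emptyset$ for all $p$: if $f(p)$ were non-empty it would contain the trivial group, and then $C_p$ would satisfy the defining condition. The same forcing occurs for soluble $\pi$-groups with $\pi$ a proper set of primes, and for $\mathfrak{F}=\mathfrak{S}$ with the natural function ($f(J)=\emptyset$ on non-abelian $J$). In all these cases your intersection runs over no factors at all (or, under the convention $\mathrm{Z}_f(H/K)=C_G(H/K)$, over centralizers only), and the formula returns $G$ (respectively $C_p$, or $A_5$ for $G=A_5$, $\mathfrak{F}=\mathfrak{S}$) instead of the correct radical, which is trivial. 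Your proof of $W\in\mathfrak{F}$ breaks at exactly this point: membership in $\mathfrak{F}$ demands $W/C_W(A/B)\in f(A/B)$ for \emph{all} chief factors of $W$, which is impossible whenever some factor below $W$ has empty $f$-value, and nothing in your construction excludes such factors from $W$.

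This is precisely where the paper's proof diverges from yours. The paper intersects only over the chief factors with $f(G_i/G_{i-1})\neq\emptyset$, obtaining a subgroup $D$ that in general \emph{properly} contains $G_\mathfrak{F}$, and then proves (Theorem \ref{int}) that $G_\mathfrak{F}=D_{E\mathfrak{F}}$, the radical for the extension closure $E\mathfrak{F}$. Passing to $D_{E\mathfrak{F}}$ is what removes the factors with empty $f$-value, because every chief factor of an $E\mathfrak{F}$-group lies in $\mathfrak{F}$ and hence has non-empty $f$-value. Algorithmically this costs a second, non-trivial subroutine that your proposal lacks entirely: Lemma \ref{rad} (algorithm EFRADICAL), which computes $(G/K)_{E\mathfrak{F}}$ in polynomial time by walking up a chief series, testing each factor for membership in $\mathfrak{F}$ via the criterion $H/K\in\mathfrak{F}$ iff $(H/K)'_{f(H/K)}=(H/K)'$, and extending the current radical by an $\mathrm{O}_{\{J\}}$-computation. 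So as written, your argument proves the theorem only under the additional, unstated assumption that $f(J)\neq\emptyset$ for all simple $J$, an assumption that fails for the main applications; repairing it requires both the corrected characterization and the extra polynomial-time $E\mathfrak{F}$-radical computation.
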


Note that if $f(J)=\emptyset$, then $G_{f(J)}=\emptyset$ for any group $J$. A Baer-local formation defined by Baer function $f$  is called local if $f(J)=\cap_{p\in\pi(J)}f(p)$ for every simple group $J$.

\begin{cor}
  Let $\mathfrak{F}$ be a local Fitting formation defined by $f$ such that $f(p)$ is a Fitting formation for  all prime $p$.
  Assume that $(G/K)_{f(p)}$ can be computed in polynomial time for every  $K\trianglelefteq G\leq S_n$. Then $(G/K)_\mathfrak{F}$ can be computed in polynomial time for every  $K\trianglelefteq G\leq S_n$.
\end{cor}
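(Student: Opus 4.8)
The plan is to obtain the statement as a direct specialization of Theorem~\ref{thm2}, so the task reduces to checking that the two hypotheses of that theorem hold for a local Fitting formation once we assume only the poly-time computability of the radicals $(G/K)_{f(p)}$ for primes $p$. By the definition recalled just before the corollary, a local Fitting formation $\mathfrak{F}$ is precisely a Baer-local formation whose Baer function satisfies $f(J)=\cap_{p\in\pi(J)}f(p)$ for every simple group $J$; hence the first thing I would record is that $\mathfrak{F}$ is Baer-local, so that Theorem~\ref{thm2} is applicable in principle.

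Next I would verify the first hypothesis of Theorem~\ref{thm2}, that each $f(J)$ is a Fitting formation. Since $J$ is a finite simple group, $\pi(J)$ is a finite set, so $f(J)$ is a finite intersection of the Fitting formations $f(p)$. Here I would invoke the standard closure facts: the intersection of any family of formations is again a formation, and the intersection of any family of Fitting classes is again a Fitting class; consequently a finite intersection of Fitting formations is a Fitting formation. This gives that $f(J)$ is a Fitting formation for every simple $J$.

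The heart of the argument is the second hypothesis, the poly-time computability of $(G/K)_{f(J)}$. The key lemma I would establish is the identity
$$(G/K)_{f(J)}=\cap_{p\in\pi(J)}(G/K)_{f(p)}.$$
I would prove it using the description of the radical as the greatest normal subgroup of $G/K$ lying in the class. Any normal subgroup of $G/K$ belonging to $f(J)=\cap_{p}f(p)$ lies in every $f(p)$ and is therefore contained in each $(G/K)_{f(p)}$, so the left-hand side is contained in the right. Conversely, set $N:=\cap_{p\in\pi(J)}(G/K)_{f(p)}$; then $N$ is normal in $G/K$, and being a normal subgroup of the $f(p)$-group $(G/K)_{f(p)}$ it lies in $f(p)$, because $f(p)$ is a Fitting class and hence closed under normal subgroups. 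As this holds for every $p\in\pi(J)$ we obtain $N\in f(J)$, so $N$ is contained in the left-hand side, and the two sides coincide. This is exactly the point at which the Fitting property of the $f(p)$, rather than merely their being formations, is used.

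Finally I would assemble the complexity bound. The primes in $\pi(J)$ relevant to a group $G\leq S_n$ all divide $|G|$ and hence are at most $n$, so there are only polynomially many of them; by hypothesis each $(G/K)_{f(p)}$ is computable in polynomial time, and the intersection of two permutation subgroups of $S_n$ is computable in polynomial time, so iterating the intersection over the primes of $\pi(J)$ keeps the computation of $(G/K)_{f(J)}=\cap_{p\in\pi(J)}(G/K)_{f(p)}$ polynomial. Both hypotheses of Theorem~\ref{thm2} now hold, and applying that theorem yields that $(G/K)_{\mathfrak{F}}$ is computable in polynomial time. I expect the only genuine obstacle to be the radical-of-intersection identity above, together with the bookkeeping that the number of primes and the successive permutation-group intersections remain polynomial; the remaining steps are routine specializations of the main theorem.
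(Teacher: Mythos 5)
Your proposal is correct and is essentially the paper's own (implicit) argument: the paper states the corollary without a separate proof, precisely because a local formation is by definition Baer-local with $f(J)=\cap_{p\in\pi(J)}f(p)$, and the two hypotheses of Theorem~\ref{thm2} are verified exactly as you do — finite intersections of Fitting formations are Fitting formations, and $(G/K)_{f(J)}=\cap_{p\in\pi(J)}(G/K)_{f(p)}$, where the containment $\supseteq$ uses normal heredity of the Fitting classes $f(p)$.

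One justification should be repaired, though it does not break the argument: your claim that ``the intersection of two permutation subgroups of $S_n$ is computable in polynomial time'' is not known to be true in general (that problem is at least as hard as graph isomorphism). What saves you is that each $(G/K)_{f(p)}$ is \emph{normal} in $G/K$, and intersecting a subgroup with a normal subgroup is polynomial-time — this is exactly item~\ref{i7} of Theorem~\ref{Basic}, which is also what the paper invokes in the analogous step of its proof of Theorem~\ref{thm2}. With that citation substituted, your argument is complete.
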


\section{Preliminaries}

Recall that a \emph{formation} is a class of groups $\mathfrak{F}$ which is   closed  under taking epimorphic images (i.e. from $G\in\mathfrak{F}$ and $N\trianglelefteq G$ it follows that $G/N\in\mathfrak{F}$)  and subdirect products (i.e. from $G/N_1\in\mathfrak{F}$ and $G/N_2\in\mathfrak{F}$ it follows that $G/(N_1\cap N_2)\in\mathfrak{F}$).
A class of groups $\mathfrak{F}$ is  called a \emph{Fitting class}   if it is normally hereditary (i.e. from $N\trianglelefteq G\in\mathfrak{F}$ it follows that $N\in\mathfrak{F}$) and $N_0$-closed (i.e. from $N_1, N_2\trianglelefteq G$ and $N_1, N_2\in\mathfrak{F}$ it follows that $N_1N_2\in\mathfrak{F}$).
If $\mathfrak{F}$ is a Fitting class and a formation, then $\mathfrak{F}$ is called the \emph{Fitting formation}.

For a class of groups $\mathfrak{X}$ recall that $E\mathfrak{X}$ denotes the class of groups with a normal series whose factors are $\mathfrak{X}$-groups. If $\mathfrak{F}$ is a Fitting formation, then $E\mathfrak{F}$ is also a Fitting formation and is closed by extensions.
Here  $\mathrm{O}_\Sigma(G)$ denotes the greatest normal subgroup of $G$ all whose composition factors belong to $\Sigma$ for a collection of simple groups $\Sigma$.


We use standard computational conventions of abstract finite groups equipped
with poly\-nomial-time procedures to compute products and inverses of elements (see \cite[Chapter 2]{Seress2003}).
For both input and output, groups are specified by generators. We will consider only $G=\langle S\rangle\leq S_n$ with $|S|\leq n^2$. If necessary, Sims' algorithm \cite[Parts 4.1 and 4.2]{Seress2003} can be used to arrange that $|S|\leq n^2$. Quotient groups are specified by generators of a group and its normal subgroup.

We need the following well known basic tools in our proofs (see, for example \cite{Kantor1990a} or \cite{Seress2003}). Note that some of them are obtained mod CFSG.

\begin{thm}\label{Basic}
  Given $A, B\trianglelefteq G = \langle S\rangle\leq S_n$ with $A\leq B$, in polynomial time one can solve the following problems:

  \begin{enumerate}

\item\label{i1}
     Find $C_{G/A}(B/A)$.

  \item\label{i5} Find a chief series for $G$ containing $A$ and $B$.

\item\label{i3.5} Test if $G/A$ is simple; if it is not, find a proper normal subgroup $N/A$ of $G/A$; if it is, find the name of $G/A$. In particular, find a type of a chief factor.

\item\label{i4} Find
$\mathrm{O}_{\Sigma}(G/A)$  for a collection $\Sigma$ of simple groups.

\item\label{i7} Given $H \leq G$,  find $H\cap A$.

\item\label{i2} Find $(G/A)'$.

    \item\label{i3}  Find $|G/A|$.




  \end{enumerate}
\end{thm}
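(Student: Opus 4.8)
The plan is to assemble the seven items from three standard engines of computational permutation group theory, each running in polynomial time on $G=\langle S\rangle\le S_n$: (a) the Schreier--Sims algorithm, which produces a base and strong generating set and hence $|G|$, a membership test, and a sifting procedure; (b) polynomial-time computation of normal closures $\langle H^{K}\rangle$; and (c) the Kantor--Luks machinery for computing in quotient groups, together with the CFSG-based identification of simple composition factors (see \cite{Kantor1990a,Seress2003}). Throughout, a quotient $G/A$ is handled via the natural epimorphism $G\to G/A$ with kernel $A$: equality of images is decided by membership in $A$ (Schreier--Sims applied to $A$), and any normal series of $G$ passing through $A$ descends to a normal series of $G/A$.

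First I would dispose of the three items that need only engines (a) and (b). For (\ref{i3}) one has $|G/A|=|G|/|A|$, with both factors supplied by Schreier--Sims. For (\ref{i2}), the derived subgroup $G'$ is the normal closure $\langle\,[s,t]:s,t\in S\,\rangle^{G}$, computable by (b); then $(G/A)'=AG'/A$, so generators of $G'$ together with those of $A$ describe it. For (\ref{i7}), $H\cap A$ is the kernel of the restriction $h\mapsto hA$ of $H$ into $G/A$; since $A$ is normal this kernel is found in polynomial time by the quotient machinery (c), equivalently by intersecting the subgroup $H$ with the \emph{normal} subgroup $A$.

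Next come the structural items, which rest on two subroutines supplied by (c): extracting a minimal normal subgroup of a quotient, and naming a simple composition factor via CFSG. For (\ref{i5}) I would compute a composition series of $G$ and refine it to a chief series; inserting $A$ and $B$ first and then refining each resulting factor, by repeatedly extracting a minimal normal subgroup of the relevant section, yields a chief series containing both $A$ and $B$. For (\ref{i3.5}), $G/A$ is simple exactly when this process produces a single chief factor; otherwise any intermediate term furnishes a proper normal subgroup $N/A$, and when $G/A$ is simple its isomorphism type is determined, mod CFSG, from $|G/A|$ together with the finitely many invariants needed to separate the order coincidences among simple groups (whence also the type of a chief factor). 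For (\ref{i4}), $\mathrm{O}_\Sigma(G/A)$ is built from below: starting from the trivial subgroup I would repeatedly adjoin a minimal normal subgroup of the current quotient all of whose named composition factors lie in $\Sigma$, halting when none remains; the terminal subgroup is $\mathrm{O}_\Sigma(G/A)$.

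The remaining and most delicate item is the centralizer (\ref{i1}). Computing centralizers of arbitrary subgroups is not known to be solvable in polynomial time and is tied to the difficulty of graph isomorphism, so the whole point is to exploit that $B/A$ is \emph{normal} in $G/A$. The plan is to realize $C_{G/A}(B/A)$ as the kernel of the conjugation action of $G/A$ on $B/A$, which the quotient machinery (c) computes in polynomial time precisely because the section is normal. I expect this normal-section centralizer, together with the CFSG-dependent naming step feeding (\ref{i5}), (\ref{i3.5}) and (\ref{i4}), to be the real substance of the theorem; the elementary items reduce cleanly once engines (a)--(c) are in hand.
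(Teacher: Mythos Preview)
Your proposal is correct in substance, but you should know that the paper does not actually prove this theorem: it is stated as a toolbox result with the remark ``We need the following well known basic tools in our proofs (see, for example \cite{Kantor1990a} or \cite{Seress2003}). Note that some of them are obtained mod CFSG.'' There is no argument given beyond these citations.

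Your sketch is therefore strictly more detailed than what the paper offers, and it accurately reflects the content of the cited sources. The decomposition into Schreier--Sims for orders and membership, normal closure for derived subgroups, and the Kantor--Luks quotient machinery (with CFSG for naming simple factors) is exactly how these items are handled in \cite{Kantor1990a} and \cite{Seress2003}. Your emphasis that item~(\ref{i1}) is tractable only because $B/A$ is a \emph{normal} section, and that item~(\ref{i7}) is tractable because $A$ is normal in $G$, correctly identifies the places where the hypotheses matter; without normality these would be graph-isomorphism-hard. So nothing is wrong with your write-up, but for the purposes of this paper a one-line citation suffices.
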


The following lemma restricts the length of a chief series of a permutation group.

\begin{lem}[\cite{Babai1986}]\label{chain}
   Given $G \leq S_n$ every chain of subgroups of $G$ has at most $2n-3$  members for $n\geq 2$.
\end{lem}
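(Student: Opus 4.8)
The plan is to prove the stronger assertion $\ell(S_n)\le 2n-3$, where $\ell(G)$ denotes the maximal number of strict inclusions in a chain $1=G_0<G_1<\dots<G_m=G$ of subgroups; since every chain of a subgroup $G\le S_n$ is a chain of $S_n$, this bounds $\ell(G)$ and hence the length of every subgroup chain of $G$. The engine of the whole argument is the additivity identity $\ell(G)=\ell(N)+\ell(G/N)$ for every $N\trianglelefteq G$. The inequality ``$\ge$'' is immediate by concatenating a longest chain of $N$ with the preimage of a longest chain of $G/N$. For ``$\le$'', given a chain $(G_i)$ I would track $A_i=G_i\cap N$ and $B_i=G_iN$: the modular law shows that if $A_i=A_{i+1}$ and $B_i=B_{i+1}$ then $G_{i+1}=G_i(G_{i+1}\cap N)=G_iA_i=G_i$, a contradiction, so each step strictly increases either $A_i$ (a chain in $N$) or $B_i$ (a chain through $N$ in $G$). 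Hence $m\le\ell(N)+\ell(G/N)$.

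Next I would use the recursion $\ell(S_n)=1+\max_M\ell(M)$, the maximum ranging over the maximal subgroups $M<S_n$, and induct on $n$ (with $\ell(S_2)=1$ and $\ell(S_3)=2$ as base cases). A maximal subgroup of $S_n$ is intransitive, transitive imprimitive, or primitive. In the intransitive case $M=S_k\times S_{n-k}$ with $1\le k\le n/2$, additivity and the induction hypothesis give $\ell(M)=\ell(S_k)+\ell(S_{n-k})\le 2n-5$ (the extreme being the point stabiliser $S_{n-1}$). In the imprimitive case $M=S_k\wr S_d$ with $n=kd$ and $k,d\ge 2$, additivity over the base group and the top group yields $\ell(M)=d\,\ell(S_k)+\ell(S_d)\le d(2k-3)+(2d-3)=2n-d-3\le 2n-5$. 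In both cases $1+\ell(M)\le 2n-4\le 2n-3$, so these maximal subgroups are harmless; the case $M=A_n$ is absorbed by descending one further level, since the maximal subgroups of $A_n$ are again of these three types and satisfy the same bounds.

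The main obstacle is the primitive case: a primitive $M\le S_n$ with $M\ne A_n,S_n$, for which I must show $\ell(M)\le 2n-4$. Here I would combine the crude estimate $\ell(M)\le\log_2|M|$ with order bounds for primitive groups. By Maróti's theorem (which rests on CFSG), apart from product-action groups and finitely many exceptions one has $|M|<n^{1+\lfloor\log_2 n\rfloor}$, whence $\ell(M)\le\log_2|M|<(\log_2 n)^2+\log_2 n$, which lies below $2n-4$ for all $n\ge 8$; the product-action groups $M\le S_m\wr S_r$ are controlled by applying the additivity identity to their wreath structure, whose degree $n\ge m^r$ forces $\log_2|M|$ to be sublinear in $n$, and the finitely many exceptional groups (the Mathieu groups) together with the small degrees $n\le 7$ are checked directly. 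I expect this primitive estimate to be the delicate part, since it is exactly where the classification of finite simple groups enters; by contrast the intransitive and imprimitive cases are pure bookkeeping once the additivity identity is available. Combining the three cases closes the induction and gives $\ell(S_n)\le 2n-3$, which recovers Babai's bound and yields the stated $2n-3$ ceiling on the length of any subgroup chain of $G\le S_n$.
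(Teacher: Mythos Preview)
The paper does not prove this lemma at all: it is stated with a citation to Babai (1986) and used as a black box, so there is no ``paper's own proof'' to compare against. Your proposal therefore goes well beyond what the paper does, supplying an actual argument rather than a reference.

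Your outline is essentially the standard induction via the O'Nan--Scott trichotomy for maximal subgroups, and it is sound in spirit. Two small points are worth tightening. First, when you ``absorb'' the case $M=A_n$ by descending one more level, the chain $S_n>A_n>M'$ already uses two steps, so you need $\ell(M')\le 2n-5$ for maximal $M'<A_n$, not merely the $2n-4$ you quote for primitive $M<S_n$; for the intransitive and imprimitive $M'$ this comes out of additivity (they sit with index $2$ inside the corresponding $S_n$-maximals), and for primitive $M'$ the Mar\'oti bound gives far less than $2n-5$ anyway, but the bookkeeping should reflect this. Second, your primitive estimate invokes Mar\'oti and hence CFSG, whereas Babai's original 1986 proof is classification-free: it uses his elementary bound $|M|\le\exp\bigl(4\sqrt{n}\,(\ln n)^2\bigr)$ for primitive $M\ne A_n,S_n$, which already forces $\ell(M)\le\log_2|M|=o(n)$. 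If you want to match the spirit of the cited source you could swap in that bound; either way the induction closes.
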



\section{Proof of Theorem \ref{thm2}}

The idea of the theorem's proof is to obtain the $\mathfrak{F}$-radical as the intersection of generalized centralizers of chief factors in the sense of the following definition:

\begin{defn}\label{cf}
For a chief factor $H/K$ of $G$ with $f(H/K)\neq \emptyset$ let $C_{G, f}(H/K)$  be defined by a) $C_G(H/K)\subseteq C_{G, f}(H/K)$ and b) $C_{G, f}(H/K)/C_G(H/K)=(G/C_G(H/K))_{f(H/K)}$.
\end{defn}

\begin{lem}\label{center}
Let $B/A$ be a chief factor of a group  $G$ with $f(B/A)\neq\emptyset$ and  $N$ be a normal subgroup of $G$ with $N\leq A$. Then
  $$C_{G/N, f}((B/N)/(A/N))=C_{G,f}(B/A)/N.$$
\end{lem}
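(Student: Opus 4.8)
The plan is to reduce the statement to the standard behaviour of ordinary centralizers and radicals under a quotient map, via the correspondence (third isomorphism) theorem. Throughout write $C:=C_G(B/A)$ and let $\pi\colon G\to G/N$ be the canonical epimorphism. I would begin with the preliminary observations that make every quotient in the statement meaningful. Since $N\trianglelefteq G$ and $N\leq A\leq B$, for $b\in B$ and $x\in N$ we have $[b,x]=(b^{-1}x^{-1}b)x\in N\leq A$, so $N$ centralizes $B/A$ and hence $N\leq C$. By Definition~\ref{cf}(a), $C\leq C_{G,f}(B/A)$, so $N\leq C_{G,f}(B/A)$ and the subgroup $C_{G,f}(B/A)/N$ on the right-hand side is well defined.

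Next I would establish the corresponding identity for the ordinary centralizer, namely $C_{G/N}\bigl((B/N)/(A/N)\bigr)=C/N$. This is routine: under the canonical isomorphism $(B/N)/(A/N)\cong B/A$ the conjugation action of a coset $gN$ on $(B/N)/(A/N)$ corresponds to the conjugation action of $g$ on $B/A$, so $gN$ centralizes $(B/N)/(A/N)$ if and only if $g\in C$; combined with $N\leq C$ this yields the claim. I would then note that the chief factors $(B/N)/(A/N)$ and $B/A$ are isomorphic, hence of the same type, so the Baer function agrees on them: $f\bigl((B/N)/(A/N)\bigr)=f(B/A)$.

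With these two facts in hand, the correspondence theorem supplies a canonical isomorphism
$$\varphi\colon (G/N)/(C/N)\;\longrightarrow\;G/C,\qquad (gN)(C/N)\mapsto gC,$$
and, being an isomorphism onto $G/C$, it carries the $f(B/A)$-radical of its domain onto $(G/C)_{f(B/A)}$, because the $\mathfrak{X}$-radical is preserved by isomorphisms. I would finish by a preimage computation. By Definition~\ref{cf}, $C_{G,f}(B/A)=\rho^{-1}\bigl((G/C)_{f(B/A)}\bigr)$ for the canonical map $\rho\colon G\to G/C$, while $C_{G/N,f}\bigl((B/N)/(A/N)\bigr)=\bar\rho^{-1}\bigl(((G/N)/(C/N))_{f(B/A)}\bigr)$ for $\bar\rho\colon G/N\to(G/N)/(C/N)$. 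Since $\rho=\varphi\circ\bar\rho\circ\pi$ and $\varphi$ matches the two radicals, taking preimages gives $C_{G,f}(B/A)=\pi^{-1}\bigl(C_{G/N,f}((B/N)/(A/N))\bigr)$; applying $\pi$ and using $N\leq C_{G,f}(B/A)$ yields the desired equality $C_{G,f}(B/A)/N=C_{G/N,f}((B/N)/(A/N))$.

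I do not expect a deep obstacle here: the whole argument is a diagram chase once the ordinary centralizer is shown to descend to the quotient. The single point demanding genuine care is the invariance of the Baer function, i.e. checking that passing from $B/A$ to $(B/N)/(A/N)$ does not alter the \emph{type} of the chief factor; the behaviour of the radical through $\varphi$ and the final pullback are then purely formal.
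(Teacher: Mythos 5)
Your proof is correct and follows essentially the same route as the paper's: both establish $C_{G/N}((B/N)/(A/N))=C_G(B/A)/N$, invoke the third isomorphism theorem to identify $(G/N)/(C/N)$ with $G/C$, and use the fact that this isomorphism carries the $f(B/A)$-radical onto the $f(B/A)$-radical before translating back via Definition~\ref{cf}. Your version is somewhat more careful on two points the paper leaves implicit --- that $N\leq C_G(B/A)$ so the right-hand side is well defined, and that $f((B/N)/(A/N))=f(B/A)$ because the type of the chief factor is unchanged --- but the underlying argument is the same.
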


\begin{proof}
Note that
\begin{multline*}
C_{G/N}((B/N)/(A/N))=\{gN\in G/N\mid [gN, g_iN]=[g, g_i]N\in A/N \textrm{ for all }g_iN\in B/N\}=\\
\{g\in G\mid [g, g_i]N\in A/N \textrm{ for all } g_i\in B\}/N=C_G(B/A)/N.
\end{multline*}
  Now
  $$(G/N)/C_{G/N}((B/N)/(A/N))=(G/N)/(C_{G}(B/A)/N)\simeq G/C_{G}(B/A).$$
  This isomorphism induces the isomorphism between $f(B/A)$-radicals of the left and the right hands parts. It means that if $F/C_{G}(B/A)=(G/C_{G}(B/A))_{f(B/A)}$, then $$(F/N)/(C_{G}(B/A)/N)=(F/N)/C_{G/N}((B/N)/(A/N))=((G/N)/C_{G/N}((B/N)/(A/N)))_{f(B/A)}.$$ Thus $C_{G/N, f}((B/N)/(A/N))=C_{G,f}(B/A)/N$.
\end{proof}

\begin{lem}\label{center1}
  Let $H/K$ be a chief factor of a group $G$ with $f(H/K)\neq\emptyset$. Then $G_\mathfrak{F}\leq C_{G, f}(H/K)$.
\end{lem}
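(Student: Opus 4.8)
The plan is to realise $G_\mathfrak{F}$ as a normal $f(H/K)$-subgroup modulo $C_G(H/K)$. Write $R = G_\mathfrak{F}$ and $C = C_G(H/K)$; since $C\trianglelefteq G$ and $f(H/K)$ is a Fitting formation, its radical $(G/C)_{f(H/K)} = C_{G,f}(H/K)/C$ is the largest normal $f(H/K)$-subgroup of $G/C$. As $R\trianglelefteq G$, the image $RC/C\cong R/(R\cap C)$ is normal in $G/C$, so it suffices to prove $R/(R\cap C)\in f(H/K)$: this forces $RC/C\le(G/C)_{f(H/K)}$ and hence $R\le C_{G,f}(H/K)$. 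Note that $R\cap C = C_R(H/K)$.

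First I would dispose of the easy case using the minimality of $H/K$ in $G/K$. Since $RK/K\trianglelefteq G/K$ and $(H\cap RK)/K$ is a normal subgroup of $G/K$ lying inside the chief factor $H/K$, either $H\cap RK = K$ or $H\le RK$. In the first case $[H,R]\le H\cap R\le H\cap RK = K$, so $R\le C$ and $R/(R\cap C)=1\in f(H/K)$ (this formation being nonempty). So I may assume $H\le RK$. Then a short computation gives $H=(R\cap H)K$, whence the natural map yields a $G$-isomorphism $H/K\cong(R\cap H)/(R\cap K)$; putting $M = R\cap H$ and $L = R\cap K$, both normal in $R$, we get $M/L\cong H/K$ as $R$-operator groups and $C_R(H/K)=C_R(M/L)$.

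The heart of the argument is to control $R/C_R(M/L)$ through the chief factors of $R$ inside $M/L$. Here I would invoke a Clifford-type principle: because $M/L$ is $G$-irreducible and $R\trianglelefteq G$, the section $M/L$ decomposes as a direct product of $R$-chief factors, each of the same composition type as $H/K$, and $C_R(M/L)$ is exactly the intersection of the $R$-centralisers of these factors. In the abelian case $M/L$ is an irreducible $\mathbb{F}_p[G]$-module, so by Clifford's theorem its restriction to $R$ is completely reducible and $C_R(M/L)=\bigcap_i C_R(V_i)$ over the irreducible constituents $V_i$. In the nonabelian case $M/L = T_1\times\cdots\times T_n$ is a product of copies of a simple group $S$ permuted transitively by $G$; the $R$-orbits group the factors into $R$-invariant direct factors $Y_1,\dots,Y_s$, each an $R$-chief factor of type $S$, and one checks that $C_R(M_j/M_{j-1})=\{g:[g,Y_j]=1\}$, so again $C_R(M/L)=\bigcap_j C_R(M_j/M_{j-1})$. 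Since $R\in\mathfrak{F}$, each $R/C_R(\text{factor})\in f(S)=f(H/K)$, and closure of the formation $f(H/K)$ under subdirect products gives $R/C_R(M/L)\in f(H/K)$, completing the reduction.

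The main obstacle I anticipate is precisely this last structural step: in general an element stabilising every factor of a normal series need not centralise the whole section (the stability group can be a nontrivial $p$-group in the abelian case), so the identity $C_R(M/L)=\bigcap C_R(\text{factor})$ is false for arbitrary sections. What rescues it is the $G$-irreducibility of $H/K$, which through Clifford's theorem forces the $R$-action on $M/L$ to be semisimple (a direct sum of irreducible modules in the abelian case, a direct product of $R$-chief factors in the nonabelian case), killing the stability group. Making this decomposition and the centraliser identity precise, uniformly across the abelian and nonabelian types, is the delicate part; once it is in place, the formation-theoretic closure properties finish the proof routinely.
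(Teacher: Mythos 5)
Your proof is correct, and while it shares the paper's overall skeleton---realize the image of $R=G_\mathfrak{F}$ in $G/C_G(H/K)$ as a normal $f(H/K)$-subgroup and then use the Fitting-class property of $f(H/K)$ to land inside $C_{G,f}(H/K)/C_G(H/K)$---the technical core is genuinely different. The paper splits on whether $H/K\in\mathfrak{F}$; when it is, it forms $T/K=HG_\mathfrak{F}/K\in\mathfrak{F}$ (using $N_0$-closure of $\mathfrak{F}$) and studies the $T/K$-chief factors inside $H/K$. You split instead on whether $H\cap RK=K$ or $H\le RK$, and in the latter case transport the chief factor into $R$ itself as $M/L=(R\cap H)/(R\cap K)$, so that only $R\in\mathfrak{F}$ is ever invoked. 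The real divergence is the abelian case: the paper works with a filtration (a chief series of $T/K$ through $H$), so it must confront the stability group; it shows that this obstruction is a $p$-group (\cite[A, Corollary~12.4]{Doerk1992}), concluding $(T/K)/C_{T/K}(H/K)\in\mathfrak{N}_pf(p)$, and then removes the unwanted $p$-part using $\mathrm{O}_p((G/K)/C_{G/K}(H/K))\simeq 1$ (\cite[A, Lemma~13.6]{Doerk1992}, valid because $H/K$ is a $G$-chief factor). You bypass the stability group altogether via Clifford's theorem: the restriction of the irreducible module $M/L$ to the normal subgroup $R$ is completely reducible, and for a direct-sum decomposition the centralizer of the section is exactly the intersection of the centralizers of the summands, after which subdirect-product closure of $f(p)$ finishes; your nonabelian case is the same orbit/semisimplicity argument the paper gets from \cite[A, Lemma~4.14]{Doerk1992}. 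What each approach buys: yours is shorter, uniform across the abelian and nonabelian types, and needs no facts about $\mathrm{O}_p$ of $G/C_G(H/K)$; the paper's stays inside one fixed chief series and avoids module-theoretic semisimplicity at the price of the $\mathfrak{N}_pf(p)$ detour. Both ultimately rest, exactly as the hypotheses of Theorem~\ref{thm2} provide, on $f(H/K)$ being a formation (for subdirect products) and a Fitting class (for the final embedding into the radical).
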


\begin{proof}
  If   $H/K\not\in \mathfrak{F}$, then from $G_\mathfrak{F}K/K\in\mathfrak{F}$ it follows that  $H/K\cap G_\mathfrak{F}K/K=K/K$. Therefore $HG_\mathfrak{F}/K=(H/K)\times G_\mathfrak{F}K/K$. It means that $G_\mathfrak{F}\leq C_G(H/K)\leq C_{G, f}(H/K)$.

Assume that $H/K\in \mathfrak{F}$ and $p\in\pi(H/K)$. Since $\mathfrak{F}$ is $N_0$-closed, we see that $T/K=HG_\mathfrak{F}/K\in\mathfrak{F}$. Now $(T/K)/C_{T/K}(M/N)\in f(M/N)=f(H/K)$ for every chief factor $M/N$ of $T/K$ below $H/K$.

If $H/K$ is abelian, then  $H/K$ is a $p$-group for some prime $p$. Let $K=H_0\trianglelefteq H_1\trianglelefteq\dots\trianglelefteq H_m=H$ be a part of chief series of $T/K$.  Then $(T/K)/C_{T/K}(H_i/H_{i-1})\in f(p)=f(H/K)$. Let $C/K=\cap_{i=1}^m C_{T/K}(H_i/H_{i-1})$. Now $(C/K)/C_{T/K}(H/K)$ is a $p$-group by [A, Corollary 12.4(a)]. Since $f(p)$ is a formation,  $(T/K)/(C/K)\in f(p)$. Thus $(T/K)/C_{T/K}(H/K)\in \mathfrak{N}_p f(p)$.
 Note that $$(T/K)/C_{T/K}(H/K)\simeq (T/K)C_{G/K}(H/K)/C_{G/K}(H/K)\trianglelefteq (G/K)/C_{G/K}(H/K).$$ Since $H/K$ is a chief factor of $G/K$, we see that $\mathrm{O}_p((G/K)/C_{G/K}(H/K))\simeq 1$ by \linebreak\cite[A, Lemma~13.6]{Doerk1992}. Hence $\mathrm{O}_p((T/K)C_{G/K}(H/K)/C_{G/K}(H/K))\simeq 1$. Thus\linebreak $(T/K)/C_{T/K}(H/K)\simeq(T/K)C_{G/K}(H/K)/C_{G/K}(H/K)\in f(p)= f(H/K)$.

If $H/K$ is non-abelian, then $H/K$ is the direct product of minimal normal subgroups $H_i/K$ of $T/K$ by \cite[A, Lemma~4.14]{Doerk1992}. Since $f(p)$ is a formation, from $C_{T/K}(H/K)=\cap_{i}C_{T/K}(H_i/K)$ it follows $(T/K)/C_{T/K}(H/K)\in f(H/K)$.

Since $f(H/K)$ is $N_0$-closed, we see that
\begin{multline*}
  (T/K)/C_{T/K}(H/K)=(T/K)/(C_{T}(H/K)/K)\simeq T/C_T(H/K)\simeq \\
  TC_G(H/K)/C_G(H/K)\subseteq (G/C_G(H/K))_{f(H/K)}=C_{G, f}(H/K)/C_G(H/K).
\end{multline*}
Therefore $G_\mathfrak{F}\leq T\leq  C_{G, f}(H/K)$.
\end{proof}

\begin{thm}\label{int}
Let $1=G_0\trianglelefteq G_1\trianglelefteq\dots\trianglelefteq G_m=G$ be a chief series of  a group $G$. Then $$G_\mathfrak{F}=\left(\bigcap_{i=1, f(G_i/G_{i-1})\neq \emptyset}^m C_{G, f}(G_i/G_{i-1})\right)_{E\mathfrak{F}}.$$
\end{thm}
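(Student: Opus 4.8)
The plan is to prove the two inclusions separately. Write $D = \left(\bigcap_{i\,:\,f(G_i/G_{i-1})\neq\emptyset} C_{G,f}(G_i/G_{i-1})\right)_{E\mathfrak{F}}$ for the right-hand side. The easier containment is $G_\mathfrak{F} \subseteq D$. By Lemma~\ref{center1} we already know $G_\mathfrak{F} \leq C_{G,f}(G_i/G_{i-1})$ for every $i$ with $f(G_i/G_{i-1})\neq\emptyset$, so $G_\mathfrak{F}$ lies in the intersection $C := \bigcap_i C_{G,f}(G_i/G_{i-1})$. Since $G_\mathfrak{F}\in\mathfrak{F}\subseteq E\mathfrak{F}$ and $G_\mathfrak{F}$ is normal in $G$, hence normal in $C$, normal hereditariness of the Fitting class $E\mathfrak{F}$ gives $G_\mathfrak{F}\leq C_{E\mathfrak{F}} = D$. (In fact $G_\mathfrak{F}\leq C_\mathfrak{F}\leq C_{E\mathfrak{F}}$, but passing through $E\mathfrak{F}$ is what the statement wants.)

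For the reverse containment $D \subseteq G_\mathfrak{F}$, the point is to show that the normal subgroup $D$ actually lies in $\mathfrak{F}$, for then $D$ is a normal $\mathfrak{F}$-subgroup of $G$ and so $D\leq G_\mathfrak{F}$ by maximality of the radical. Since $D = C_{E\mathfrak{F}}\in E\mathfrak{F}$, it suffices to check that $D\in\mathfrak{F}$, which by the Baer-local definition of $\mathfrak{F}$ means verifying that $D/C_D(M/N)\in f(M/N)$ for every chief factor $M/N$ of $D$. I would test this through the chief factors $G_i/G_{i-1}$ of the fixed series: for each chief factor $M/N$ of $D$ I want to relate $C_D(M/N)$ to the centralizers $C_{G,f}(G_i/G_{i-1})$ appearing in the intersection $C$. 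The mechanism should be that $D\leq C$ forces $D/C_D(G_i/G_{i-1})$ (whenever $G_i/G_{i-1}$ is $D$-relevant) to sit inside $(G/C_G(G_i/G_{i-1}))_{f(G_i/G_{i-1})}$ modulo $C_G$, hence to lie in $f(G_i/G_{i-1})$; one then transports this information along a chief series of $D$ refining the intersection with the $G_i$, using that each $f(J)$ is a formation (so quotients and subdirect products stay in $f(J)$) together with the $N_0$-closure supplied by the Fitting hypothesis.

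The \textbf{main obstacle} is precisely this transport: the chief factors of $D$ need not coincide with the $G_i/G_{i-1}$, so I must compare the centralizer of a $D$-chief factor $M/N$ with the generalized centralizers built from the $G$-chief series. The natural route is to refine, inside $D$, the normal series $1=D\cap G_0\leq D\cap G_1\leq\dots\leq D\cap G_m = D$ and argue factor by factor, invoking \cite[A, Lemma~13.6]{Doerk1992} and \cite[A, Corollary~12.4(a)]{Doerk1992} (as in the proof of Lemma~\ref{center1}) to control $p$-contributions in the abelian case and \cite[A, Lemma~4.14]{Doerk1992} to split the non-abelian case into a direct product of minimal normal subgroups. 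I expect the cleanest formulation to run through $E\mathfrak{F}$: because $E\mathfrak{F}$ is closed under extensions and is itself a Fitting formation, $D\in E\mathfrak{F}$ is automatic, and the real content is showing the generalized centralizer condition cuts $E\mathfrak{F}$ down to $\mathfrak{F}$ on $D$. I would also use Lemma~\ref{center} to reduce, if convenient, to the case where the bottom of the relevant chief factor is trivial, keeping the bookkeeping of the centralizers manageable.
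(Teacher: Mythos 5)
Your proposal is correct and is essentially the paper's own argument: the inclusion $G_\mathfrak{F}\leq D$ comes from Lemma~\ref{center1} together with $G_\mathfrak{F}\in E\mathfrak{F}$, and the reverse inclusion comes from verifying the Baer-local membership test for $D$, where $D\leq C_{G,f}(G_i/G_{i-1})$ makes $DC_G(G_i/G_{i-1})/C_G(G_i/G_{i-1})$ a normal subgroup of $(G/C_G(G_i/G_{i-1}))_{f(G_i/G_{i-1})}$, hence an $f(G_i/G_{i-1})$-group by normal hereditariness (this, not $N_0$-closure, is the Fitting property you need here), after which a Jordan--H\"{o}lder-type identification (the paper takes a fresh $G$-chief series below the radical; you intersect with the given series $G_i$, which amounts to the same thing) and quotient-closure of the $f(J)$ spread the condition to all chief factors of $D$. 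The one inessential deviation is that you anticipate needing \cite[A, Corollary~12.4(a)]{Doerk1992}, \cite[A, Lemma~13.6]{Doerk1992} and \cite[A, Lemma~4.14]{Doerk1992} again: those are only needed inside the proof of Lemma~\ref{center1}, which you simply cite, because there one must pass from chief factors of $HG_\mathfrak{F}$ \emph{up} to a chief factor of $G$, whereas here the passage runs in the opposite, easy direction---from a $G$-chief factor $G_i/G_{i-1}$ \emph{down} to a $D$-chief factor $M/N$ inside it, where $C_D(G_i/G_{i-1})\leq C_D(M/N)$, the equality of types $f(M/N)=f(G_i/G_{i-1})$, and quotient-closure of $f(M/N)$ already suffice.
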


\begin{proof} We assume that every  intersection of  empty collection of subgroups of $G$ coincides with $G$.
Let $D=\bigcap_{i=1, f(G_i/G_{i-1})\neq \emptyset}^m C_{G, f}(G_i/G_{i-1}).$
  From Lemma \ref{center1} it follows that $G_\mathfrak{F}\subseteq D$. From the other hand $G_\mathfrak{F}\in E\mathfrak{F}$. Thus $G_\mathfrak{F}\leq D_{E\mathfrak{F}}$. Let $1=D_0\trianglelefteq D_1\trianglelefteq\dots\trianglelefteq D_l=D$ be a part of chief series of $G$ below $ D_{E\mathfrak{F}}$. Then by Jordan-H\"{o}lder Theorem there is $\pi:\{1,\dots,l\}\rightarrow\{1,\dots,m\}$ such that $D_i/D_{i-1}$ is $G$-isomorphic to $G_{\pi(i)}/G_{\pi(i)-1}$ for all $i\in\{1,\dots,l\}$. Now $C_G(G_{\pi(i)}/G_{\pi(i)-1})=C_G(D_i/D_{i-1})$ for all $i\in\{1,\dots,l\}$. Hence $C_{G, f}(G_{\pi(i)}/G_{\pi(i)-1})=C_{G, f}(D_i/D_{i-1})$ by Definition~\ref{cf}. Note that \begin{multline*}
    D_{E\mathfrak{F}}/C_{D_{E\mathfrak{F}}}(D_i/D_{i-1})
  \simeq{D_{E\mathfrak{F}}}C_{G}(D_i/D_{i-1})/C_{G}(D_i/D_{i-1})=\\
  {D_{E\mathfrak{F}}}C_{G}(G_{\pi(i)}/G_{\pi(i)-1})/C_{G}(G_{\pi(i)}/G_{\pi(i)-1})\\
  \trianglelefteq C_{G, f}(G_{\pi(i)}/G_{\pi(i)-1})/C_{G}(G_{\pi(i)}/G_{\pi(i)-1})\in f(G_{\pi(i)}/G_{\pi(i)-1})=f(D_i/D_{i-1}).
  \end{multline*}
  Since $f(D_i/D_{i-1})$ is normally hereditary we see that
   $D_{E\mathfrak{F}}/C_{D_{E\mathfrak{F}}}(D_i/D_{i-1})\in f(D_i/D_{i-1})$.
   Since $f(D_i/D_{i-1})$ is closed under taking quotients, we see that
   $$D_{E\mathfrak{F}}/C_{D_{E\mathfrak{F}}}(H/K)\in f(D_i/D_{i-1})=f(H/K)$$ for all chief factors $H/K$ of $D_{E\mathfrak{F}}$ between $D_{i-1}$ and $D_i$ for all $i\in\{1,\dots, l\}$. From Jordan-H\"{o}lder theorem it follows that $D_{E\mathfrak{F}}/C_{D_{E\mathfrak{F}}}(H/K)\in f(H/K)$ for all chief factors $H/K$ of $D_{E\mathfrak{F}}$. Therefore $D_{E\mathfrak{F}}\in \mathfrak{F}$. Thus $D_{E\mathfrak{F}}=G_\mathfrak{F}$.
\end{proof}

\begin{lem}\label{fcent}
  Let $B/A$ be a chief factor of $G$. Then $C_{G,f}(B/A)$ can be computed in a polynomial time $($in the assumptions of Theorem \ref{thm2}$)$.
\end{lem}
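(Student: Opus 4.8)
The plan is to compute $C_{G,f}(B/A)$ directly from Definition \ref{cf}, which prescribes it as the preimage in $G$ of the $f(B/A)$-radical of the quotient $G/C_G(B/A)$. So the computation factors into three tasks, each of which is either a basic tool or an assumption of Theorem \ref{thm2}.

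First I would determine the type of the chief factor $B/A$ and compute $C=C_G(B/A)$. The type is obtained by Theorem \ref{Basic}(\ref{i3.5}) (find the name/type of the chief factor), and since $B/A$ is a chief factor of $G$ all its simple direct factors are isomorphic to a single simple group $J$, so $f(B/A)=f(J)$. The centralizer $C=C_G(B/A)$ is computed in polynomial time by Theorem \ref{Basic}(\ref{i1}) applied to $C_{G/A}(B/A)$ and then taking the preimage. Before proceeding I would check whether $f(J)=\emptyset$; if so the generalized centralizer is not defined, but in the context where Lemma \ref{fcent} is used (computing the intersection in Theorem \ref{int}) we only ever need $C_{G,f}(B/A)$ when $f(B/A)\neq\emptyset$, so this case is excluded.

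Second, having $C=C_G(B/A)$ as a normal subgroup of $G$, I would form the quotient $G/C$ (specified by generators of $G$ together with generators of $C$, per the conventions in the Preliminaries) and invoke the hypothesis of Theorem \ref{thm2}: since $f(J)$ is a Fitting formation and $(G/K)_{f(J)}$ is computable in polynomial time for every $K\trianglelefteq G\leq S_n$, I apply this with $K=C$ to obtain $R/C=(G/C)_{f(J)}=(G/C)_{f(B/A)}$ in polynomial time. The subgroup $R\leq G$, recovered as the preimage of $R/C$, is then by part (b) of Definition \ref{cf} exactly $C_{G,f}(B/A)$, since $R\supseteq C=C_G(B/A)$ satisfies $R/C_G(B/A)=(G/C_G(B/A))_{f(B/A)}$.

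The main obstacle is essentially bookkeeping rather than genuine difficulty: one must verify that composing these steps stays within polynomial time and that the quotient $G/C$ is presented in the form required by the radical-computation hypothesis. In particular the hypothesis is stated for quotients $G/K$ with $K\trianglelefteq G\leq S_n$, and here $K=C_G(B/A)$ is indeed such a normal subgroup, so the application is legitimate; the only care needed is that each preimage and quotient construction is one of the standard polynomial-time manipulations of permutation groups and their quotients. Since the number of such operations is bounded by a constant and each runs in polynomial time, the total cost is polynomial, which gives the claim.
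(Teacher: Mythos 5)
Your proposal is correct and follows essentially the same route as the paper: compute $C_G(B/A)$ via Theorem \ref{Basic}(\ref{i1}) and then invoke the hypothesis of Theorem \ref{thm2} to obtain $(G/C_G(B/A))_{f(B/A)}$ in polynomial time. The extra details you supply (identifying the type $J$ via Theorem \ref{Basic}(\ref{i3.5}) so the right subroutine can be called, and noting that the case $f(B/A)=\emptyset$ is excluded by Definition \ref{cf} and by how the lemma is used in Theorem \ref{int}) are sensible bookkeeping that the paper leaves implicit.
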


\begin{proof}
Note that $T/A=C_{G/A}(B/A)$ can be computed in a polynomial time by \ref{i1} of  Theorem~\ref{Basic} and $T=C_G(B/A)$. Now $G/C_G(B/A)=G/T$. By our assumption $(G/T)_{f(B/A)}$ can be computed in a polynomial time.
\end{proof}


\begin{lem}\label{rad}
 In the assumptions of Theorem \ref{thm2} we can compute $(G/K)_{E\mathfrak{F}}$  in  polynomial time.
\end{lem}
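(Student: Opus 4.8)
The plan is to reduce the computation of the $E\mathfrak{F}$-radical to a single computation of $\mathrm{O}_\Sigma$ and then invoke \ref{i4} of Theorem~\ref{Basic}. For a simple group $J$ declare $J\in\Sigma$ when $J$ is abelian, i.e. $J\simeq C_p$, and $f(p)\neq\emptyset$, or when $J$ is non-abelian and $J\in f(J)$. Writing $\bar G=G/K$, I claim that $(\bar G)_{E\mathfrak{F}}=\mathrm{O}_\Sigma(\bar G)$. The heart of the matter is the following description of characteristically simple members of $E\mathfrak{F}$: if $X$ is a chief factor of $\bar G$ of type $J$, then $X\in E\mathfrak{F}$ iff $X\in\mathfrak{F}$ iff $J\in\Sigma$.

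To prove this description I would use only that $E\mathfrak{F}$ is a Fitting formation closed under extensions, together with the Baer-local definition of $\mathfrak{F}$. Since $\mathfrak{F}\subseteq E\mathfrak{F}$, it suffices to check $X\in\mathfrak{F}\Leftrightarrow J\in\Sigma$ and $X\in E\mathfrak{F}\Rightarrow J\in\Sigma$. For the equivalence: if $X$ is an elementary abelian $p$-group then $C_X(U/V)=X$ for every chief factor $U/V$ of $X$, so $X/C_X(U/V)=1$, whence $X\in\mathfrak{F}$ exactly when $1\in f(p)$, i.e. when $f(p)\neq\emptyset$; if $X\simeq J^r$ with $J$ non-abelian simple then the chief factors of $X$ are the simple direct factors and $X/C_X(J_i)\simeq J$, so $X\in\mathfrak{F}$ exactly when $J\in f(J)$. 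For the implication $X\in E\mathfrak{F}\Rightarrow J\in\Sigma$, take a normal series of $X$ with $\mathfrak{F}$-factors; its bottom non-trivial factor is a non-trivial $p$-group in the abelian case and, since the normal subgroups of $J^r$ are sub-products, isomorphic to some $J^k$ in the non-abelian case, and lying in $\mathfrak{F}$ it forces $f(p)\neq\emptyset$ respectively $J\in f(J)$ by the previous analysis.

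Granting the description, both inclusions of $(\bar G)_{E\mathfrak{F}}=\mathrm{O}_\Sigma(\bar G)$ follow. For $\mathrm{O}_\Sigma(\bar G)\leq(\bar G)_{E\mathfrak{F}}$: in a chief series of $\bar G$ through $\mathrm{O}_\Sigma(\bar G)$ every chief factor below it has all its composition factors in $\Sigma$, hence is of a type in $\Sigma$ and so lies in $\mathfrak{F}$; thus $\mathrm{O}_\Sigma(\bar G)$ has a normal series with $\mathfrak{F}$-factors and belongs to $E\mathfrak{F}$. For $(\bar G)_{E\mathfrak{F}}\leq\mathrm{O}_\Sigma(\bar G)$: since $E\mathfrak{F}$ is normally hereditary and closed under epimorphic images, every chief factor of $(\bar G)_{E\mathfrak{F}}$ again lies in $E\mathfrak{F}$, hence in $\mathfrak{F}$ by the description, so its composition factors are in $\Sigma$; therefore all composition factors of $(\bar G)_{E\mathfrak{F}}$ lie in $\Sigma$ and this normal subgroup is contained in the greatest normal subgroup with that property.

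Finally, to make this algorithmic I would compute a chief series of $G/K$ by \ref{i5} of Theorem~\ref{Basic} (of length at most $2n-3$ by Lemma~\ref{chain}), determine the type $J$ of each chief factor $M/R$ by \ref{i3.5}, and decide $J\in\Sigma$ as follows: in the abelian case compute $(M/R)_{f(p)}$ with the assumed oracle and record $J\in\Sigma$ unless the oracle returns the empty flag, which by the stated convention happens exactly when $f(p)=\emptyset$; in the non-abelian case compute $(M/R)_{f(J)}$ and record $J\in\Sigma$ iff it equals $M/R$, comparing orders by \ref{i3} (this is legitimate because $f(J)$ is a Fitting formation, so $J^r\in f(J)$ iff $J\in f(J)$). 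Collecting the accepted types yields $\Sigma$ restricted to the composition factors of $G/K$, and a single call to \ref{i4} returns $\mathrm{O}_\Sigma(G/K)=(G/K)_{E\mathfrak{F}}$. Only polynomially many oracle calls are made, so the procedure runs in polynomial time. I expect the structural description of the second paragraph — pinning down exactly when a characteristically simple group lies in $E\mathfrak{F}$ — to be the main obstacle, after which the computation is a routine assembly of the tools in Theorem~\ref{Basic}.
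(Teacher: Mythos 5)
Your proposal is correct, but its algorithmic core is genuinely different from the paper's. Both arguments rest on the same local analysis of characteristically simple groups: an elementary abelian $p$-group lies in $\mathfrak{F}$ iff $f(p)\neq\emptyset$, and a direct power of a non-abelian simple group $J$ lies in $\mathfrak{F}$ iff $J\in f(J)$ --- the paper packages exactly this as the test $(H/K)'_{f(H/K)}=(H/K)'$, which is your membership test for $\Sigma$. Where you diverge is in how the test is turned into a computation of the radical. The paper climbs a chief series $K=G_0\trianglelefteq G_1\trianglelefteq\dots\trianglelefteq G_m=G$, maintaining $F_i/K=(G_i/K)_{E\mathfrak{F}}$ via the observations $F_i\cap G_{i-1}=F_{i-1}$ and, when $G_i/G_{i-1}\in\mathfrak{F}$ is of type $J$, $F_i/F_{i-1}=\mathrm{O}_{\{J\}}(G_i/F_{i-1})$, whose justification uses extension-closure of $E\mathfrak{F}$; this makes one $\mathrm{O}_{\{J\}}$ call per chief factor, interleaved with the climb. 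You instead prove the global identity $(G/K)_{E\mathfrak{F}}=\mathrm{O}_\Sigma(G/K)$ --- in effect, that $E\mathfrak{F}$ coincides with the class of all groups whose composition factors lie in $\Sigma$ --- and then need only a single call to \ref{i4} of Theorem \ref{Basic} after deciding $J\in\Sigma$ for the polynomially many types occurring in one chief series. This class-level identity is a stronger structural statement than anything the paper establishes; the extra work is your implication ``$X\in E\mathfrak{F}$ forces the type of $X$ into $\Sigma$,'' which you correctly extract from the bottom term of a series with $\mathfrak{F}$-factors (using that normal subgroups of $J^r$ are sub-products and that $f(J)$ is normally hereditary). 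What each approach buys: yours collapses the correctness proof to a one-shot reduction with no induction up the series and exposes the fact that $E\mathfrak{F}$ is determined by its simple members; the paper's iterative scheme never needs the class equality, relying only on extension-closure of $E\mathfrak{F}$, and translates directly into its EFRADICAL pseudo-code. Both make $O(n)$ oracle calls by Lemma \ref{chain} and run in polynomial time.
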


\begin{proof}
Let $H/K$ be a non-abelian chief factor of $G$ of type $J$. We claim that $H/K\in\mathfrak{F}$ iff $(H/K)_{f(J)}=H/K$. Note that $H/K$ is a direct product of groups isomorphic to $J$. If  $(H/K)_{f(J)}=H/K$, then since $f(J)$ is normally hereditary, $J\in f(J)$. Now $J\in \mathfrak{F}$ by the definition of Baer-local formation. Since $\mathfrak{F}$ is $N_0$-closed $H/K\in \mathfrak{F}$. Assume now that $H/K\in \mathfrak{F}$. Hence $J\in \mathfrak{F}$. Since  $J$ is non-abelian,   $J\simeq J/C_J(J)\in f(J)$. Therefore $H/K=(H/K)_{f(J)}$ as a direct product of $f(J)$-groups.

Let $H/K$ be an abelian chief factor. We claim that $H/K\in\mathfrak{F}$ iff $f(H/K)\neq\emptyset$. It is clear that if $f(H/K)=\emptyset$, then $H/K\not\in\mathfrak{F}$.  Assume that $f(H/K)\neq\emptyset$. Hence $1\in f(H/K)$. Now $(H/K)/C_{H/K}(U/V)\simeq 1\in f(H/K)=f(U/V)$ for any chief factor $U/V$ of $H/K$. Thus $H/K\in\mathfrak{F}$ by the definition of Baer-local formation.

Note that the two above mentioned cases are equivalent to $H/K\in\mathfrak{F}$ iff $(H/K)'_{f(H/K)}=(H/K)'$.
Therefore if $H/K$ is a chief factor of $G$ we can check if $H/K\in\mathfrak{F}$ in polynomial time by our assumption and Theorem \ref{Basic}.

  We can compute  a chief series $K=G_0\trianglelefteq G_1\trianglelefteq\dots\trianglelefteq G_m=G$  of  a group $G$ in polynomial time by \ref{i5} of Theorem \ref{Basic}. Define $F_i$ by $F_i/K=(G_i/K)_{E\mathfrak{F}}$. Note that $F_i \cap G_{i-1} =F_{i-1} $ and $F_0=K$.  Now
  $$F_i/F_{i-1}=F_i/(F_i\cap G_{i-1})=F_i G_{i-1}/G_{i-1}\trianglelefteq G_i/G_{i-1}.$$
Therefore $F_i/F_{i-1}$ is isomorphic to either 1 of $G_i/G_{i-1}$. If $G_i/G_{i-1}\not\in\mathfrak{F}$, then $F_i=F_{i-1}$. Assume that $G_i/G_{i-1}\in\mathfrak{F}$.  If $G_i/G_{i-1}$ is a  group of type $J$, then $F_i/F_{i-1}$ is also such a group. Since $E\mathfrak{F}$ is closed under extensions, we see that $F_i/F_{i-1}=\mathrm{O}_{\{J\}}(G_i/F_{i-1})$. Hence $F_i$ can be computed in polynomial time by \ref{i3.5} and \ref{i4} of Theorem \ref{Basic}.
 \end{proof}

\begin{algorithm}[H]
\caption{EFRADICAL$(G, K, \mathfrak{F})$}
\SetAlgoLined
\KwResult{$(G/K)_{E\mathfrak{F}}$.}
\KwData{A normal subgroup $K$ of a  group $G$, $\mathfrak{F}=BLF(f)$.}
Compute a chief series $K=G_0\trianglelefteq G_1\trianglelefteq\dots\trianglelefteq G_m=G$ of $G$\;
$F\gets K$\;
\For{$i\in\{1,\dots, m\}$}
  {\If{$(G_i/G_{i-1})'_{f(G_i/G_{i-1})}=(G_i/G_{i-1})'$}
          {$J\gets$type of $H/K$\;
           $F_1/F\gets \mathrm{O}_{\{J\}}(G_i/F)$\;
           $F\gets F_1$\;}
  }
\Return{$F/K$}
\end{algorithm}

\subsection{Proof of the theorem}

Let $K=G_0\trianglelefteq G_1\trianglelefteq\dots\trianglelefteq G_k=G$ be a part of chief series of $G$ (it can be computed in polynomial time by \ref{i5} of Theorem \ref{Basic}). Now by Lemma \ref{center}
 $$ I/K=\bigcap_{i=1, f((G_i/K)/(G_{i-1}/K))\neq \emptyset}^m C_{G/K, f}((G_i/K)/(G_{i-1}/K))
 =\bigcap_{i=1, f(G_i/G_{i-1})\neq \emptyset}^m C_{G, f}(G_i/G_{i-1})/K.$$
Hence this subgroup can be computed in a polynomial time by \ref{i7} of Theorem \ref{Basic} and Lemma \ref{fcent}. Using Lemma \ref{rad} we can compute
$R/K=  (I/K)_{E\mathfrak{F}}$. Now $R/K=(G/K)_\mathfrak{F}$ by Thoerem \ref{int}.

\begin{algorithm}[H]
\caption{FRADICAL$(G, K, \mathfrak{F})$}
\SetAlgoLined
\KwResult{$(G/K)_\mathfrak{F}$.}
\KwData{A normal subgroup $K$ of a  group $G$, $\mathfrak{F}=BLF(f)$.}

$T\gets G$\;
Compute a chief series $K=G_0\trianglelefteq G_1\trianglelefteq\dots\trianglelefteq G_m=G$ of $G$\;

\For{$i\in\{1,\dots, m\}$}
  {\If{$f(G_i/G_{i-1})\neq\emptyset$}
          {$C/G_{i-1}\gets C_{G/G_{i-1}}(G_i/G_{i-1})$\;
           $F/C\gets (G/C)_{f(G_i/G_{i-1})}$\; 
          $T/K\gets T/K\cap F/K$\;}
  }

\Return{EFRADICAL$(T, K, \mathfrak{F})$}

\end{algorithm}

\section{Applications}

\subsection{The $\mathfrak{F}$-Radical for a Primitive Saturated Formation}

From the fundamental result of Bryce and Cossey \cite{Bryce1982} the hereditary Fitting class of soluble groups is a primitive saturated formation.
Let $\mathcal{F}_0$ denote the family consisting of the empty set, the formation of groups of
order one, and the formation of all soluble groups, and then, for $i>0$,
define $\mathcal{F}_i$   inductively by $\mathfrak{F}\in\mathcal{F}_i$, if either $\mathfrak{F}\in\mathcal{F}_{i-1}$, or $\mathfrak{F}$ is a local formation,
with local definition $f$ such that $f(p)\in\mathcal{F}_{i-1}$, for all prime $p$. Finally
let $\mathcal{F}$  be the family comprising all formations $\mathfrak{F}$ such that $\mathfrak{F}=\cup_j \mathfrak{F}_j$ with each $\mathfrak{F}_j\in\cup_i\mathcal{F}_i$ and $\mathfrak{F}_j\subseteq \mathfrak{F}_{j+1}$. Formations from $\mathcal{F}$ are called primitive \cite[VII, Definition 3.1]{Doerk1992}. As was mentioned in \cite{Bryce1972} if $G\in\mathfrak{F}\in\mathcal{F}$ and the nilpotent length of $G$ is less than $m$, then there exists $\mathfrak{H}\in\mathcal{F}_m$ with $G\in\mathfrak{H}$.


\begin{thm}
  Let $m$ be a natural number. If $\mathfrak{F}\in\mathcal{F}_m$   and $K\leq G\leq S_n$, then $(G/K)_\mathfrak{F}$ can be computed in a polynomial time $($in $n)$.
\end{thm}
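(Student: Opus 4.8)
The plan is to prove the theorem by induction on $m$, using Theorem \ref{thm2} (or its Corollary for the local case) as the engine that lifts computability of the $f(p)$-radicals to computability of the $\mathfrak{F}$-radical. The base of the induction is $\mathcal{F}_0$, which consists only of the empty formation, the trivial formation $(1)$, and the formation $\mathfrak{S}$ of all soluble groups. For each of these the $\mathfrak{F}$-radical is immediate: for the empty formation there is nothing to compute, for $(1)$ the radical is $K/K$, and for $\mathfrak{S}$ the radical $(G/K)_\mathfrak{S}$ is exactly the soluble radical, which is the intersection of the centralizers of all non-abelian chief factors and hence computable in polynomial time by items \ref{i5}, \ref{i3.5} and \ref{i1} of Theorem \ref{Basic}.

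For the inductive step, suppose the claim holds for every formation in $\mathcal{F}_{m-1}$, and let $\mathfrak{F}\in\mathcal{F}_m$. Either $\mathfrak{F}\in\mathcal{F}_{m-1}$, in which case we are done by the inductive hypothesis, or $\mathfrak{F}$ is a local formation with local definition $f$ satisfying $f(p)\in\mathcal{F}_{m-1}$ for every prime $p$. In the latter case the inductive hypothesis applies to each $f(p)$: for every $K\trianglelefteq G\leq S_n$ and every prime $p$ the radical $(G/K)_{f(p)}$ can be computed in polynomial time. To invoke the Corollary to Theorem \ref{thm2} I must also check that $\mathfrak{F}$ and each $f(p)$ are Fitting formations. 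Here I would lean on the Bryce--Cossey result cited at the start of the subsection: every formation in $\mathcal{F}$ is a (hereditary) Fitting class of soluble groups, so in particular each $f(p)\in\mathcal{F}_{m-1}\subseteq\mathcal{F}$ is a Fitting formation and $\mathfrak{F}$ itself is a local Fitting formation. With these hypotheses verified, the Corollary yields that $(G/K)_\mathfrak{F}$ is computable in polynomial time, completing the induction.

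The point deserving the most care is the verification that the hypotheses of the Corollary are genuinely met, rather than the running of the algorithm itself. Two things must be confirmed: first, that the members of $\mathcal{F}_{m-1}$ really are Fitting formations (and not merely formations), which is where the primitivity machinery and the Bryce--Cossey characterisation enter; and second, that the local definition $f$ can be presented to the algorithm in the required constructive form, i.e. that for each prime $p$ we can actually invoke the $f(p)$-radical subroutine. Since the hypothesis of the theorem places $\mathfrak{F}$ in the explicitly built hierarchy $\mathcal{F}_m$, the local definition $f$ comes equipped by construction with the formations $f(p)\in\mathcal{F}_{m-1}$, so the subroutine supplied by the inductive hypothesis is exactly what the Corollary requires. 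I expect the main obstacle to be bookkeeping the polynomial bound across the recursion: each level of the induction inserts an application of the $\mathfrak{F}$-radical algorithm whose cost is polynomial in $n$ but whose degree may depend on $m$. Because $m$ is a fixed constant (the natural number in the statement), the composition of finitely many polynomial-time procedures remains polynomial in $n$, so the degree growth is harmless, but this should be stated carefully so that the word ``polynomial'' is understood with $m$ regarded as a constant.
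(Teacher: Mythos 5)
Your proof is correct and follows essentially the same route as the paper: induction on the level of the hierarchy $\mathcal{F}_m$, with the base case handled directly and the inductive step supplied by Theorem \ref{thm2} (via its Corollary for local formations). The only minor differences are that the paper computes the soluble radical in the base case as $\mathrm{O}_\Sigma(G/K)$ via item \ref{i4} of Theorem \ref{Basic} rather than as an intersection of centralizers of non-abelian chief factors, and that you make explicit two points the paper leaves implicit, namely the Bryce--Cossey verification that members of $\mathcal{F}_{i}$ are Fitting formations and the observation that the polynomial degree may grow with the fixed constant $m$.
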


\begin{proof}
  It is clear that if $i=0$, then $(G/K)_\mathfrak{F}$ can be computed in polynomial time by \ref{i4} of Theorem \ref{Basic}. Assume that we can compute $(G/K)_\mathfrak{F}$ in polynomial time for every $\mathfrak{F}\in\mathcal{F}_{i-1}$. If $\mathfrak{F}\in\mathcal{F}_{i}\setminus\mathcal{F}_{i-1}$, then the values $f(p)$ of local definition of $\mathfrak{F}$ are in $\mathcal{F}_{i-1}$. By our assumption we can compute  $f(p)$-radicals in polynomial time. Hence we can compute $\mathfrak{F}$-radical in polynomial time in $n$ by Theorem \ref{thm2}.
\end{proof}

\subsection{The $\mathfrak{F}$-Length}

With some modifications the generalized Fitting subgroup can be computed in polynomial time by Theorem \ref{thm2}.

\begin{thm}\label{star}
  For $K\trianglelefteq G\leq S_n$ the generalized Fitting subgroup $\mathrm{F}^*(G/K)$ of $G/K$ can be computed in polynomial time.
\end{thm}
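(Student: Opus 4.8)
The plan is to realize the generalized Fitting subgroup as the radical of a suitable Baer-local Fitting formation and then to invoke Theorem~\ref{thm2}. Let $\mathfrak{N}^*$ denote the class of quasinilpotent groups; as recalled in the introduction, $\mathfrak{N}^*$ is a Baer-local Fitting formation and $\mathrm{F}^*(G)=G_{\mathfrak{N}^*}$, so it suffices to exhibit a Baer function $f$ defining $\mathfrak{N}^*$ to which Theorem~\ref{thm2} applies. I would take the canonical function $f(p)=(1)$ (the formation of groups of order $1$) for every prime $p$, and $f(J)=\mathfrak{D}_J$, the class of all direct products of copies of $J$, for every non-abelian simple group $J$. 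The point of this choice is that for a chief factor $H/K$ of type $J$ the condition $G/C_G(H/K)\in f(H/K)$ is exactly the statement that $G$ induces only inner automorphisms on $H/K$: for abelian $H/K$ it forces $C_G(H/K)=G$, while for non-abelian $H/K$ the inner automorphism group $\mathrm{Inn}(H/K)\cong J^n$ is normal in $G/C_G(H/K)$, and once the latter lies in $\mathfrak{D}_J$ its complementary direct factor centralizes $\mathrm{Inn}(H/K)$ and is therefore trivial, so the whole action is inner. Thus this $f$ defines precisely $\mathfrak{N}^*$.

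It then remains to check the two hypotheses of Theorem~\ref{thm2}. That each $f(J)$ is a Fitting formation is routine: $(1)$ is trivially one, and $\mathfrak{D}_J$ is closed under quotients and subdirect products (a subdirect product of copies of a non-abelian simple group is again a direct product of copies of it), as well as normally hereditary and $N_0$-closed. The computational hypothesis is where the announced modification enters: Theorem~\ref{thm2} assumes the $f(J)$-radicals are computable, and I would supply this directly. For $f(p)=(1)$ the radical is trivial. For non-abelian $J$ the key structural observation is that a finite group is a direct product of copies of $J$ if and only if every one of its composition factors is isomorphic to $J$. Indeed, a group $X$ with all composition factors $\cong J$ has no abelian composition factor, hence trivial Fitting subgroup, so $\mathrm{F}^*(X)=E(X)$ with $Z(E(X))=1$ and thus $E(X)\cong J^n$; since $X/E(X)$ embeds in $\mathrm{Out}(E(X))$, which is soluble by Schreier's conjecture, $X/E(X)$ can have no composition factor $\cong J$, whence $X=E(X)\cong J^n\in\mathfrak{D}_J$. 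Consequently $(G/K)_{f(J)}=\mathrm{O}_{\{J\}}(G/K)$, computable in polynomial time by \ref{i4} of Theorem~\ref{Basic}; the relevant $J$ is the type of the chief factor, found via \ref{i3.5} of Theorem~\ref{Basic}, so only finitely many simple groups $J$ are ever needed.

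With both hypotheses verified, Theorem~\ref{thm2} yields $\mathrm{F}^*(G/K)=(G/K)_{\mathfrak{N}^*}$ in polynomial time. I expect the only genuinely nontrivial step to be the identification, for non-abelian $J$, of $\mathfrak{D}_J$ with the class of groups all of whose composition factors are $\cong J$: this is precisely the ``modification'' by which the general machinery of Theorem~\ref{thm2} is fitted to $\mathrm{F}^*$, since it reduces the computation of the $f(J)$-radical to the already available operator $\mathrm{O}_{\{J\}}$. That step rests on the solubility of the outer automorphism groups of finite simple groups, and hence on CFSG, which is consistent with the standing conventions of the paper.
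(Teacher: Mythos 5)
Your choice of Baer function is the same as the paper's ($f(p)=(1)$ for primes, $f(J)=\mathfrak{D}_J=D_0(J)$ for non-abelian $J$), and your verifications that this function defines $\mathfrak{N}^*$ and that each value is a Fitting formation are fine. The genuine gap is in the computational hypothesis of Theorem~\ref{thm2}: your key structural claim --- that a finite group all of whose composition factors are isomorphic to $J$ is a direct product of copies of $J$ --- is false. Take $J=A_5$ and let $X=A_5\wr A_5=A_5^5\rtimes A_5$ be the wreath product with respect to the natural action of $A_5$ on five points. Every composition factor of $X$ is isomorphic to $A_5$, but the only normal subgroups of $X$ are $1$, the base $B=A_5^5$ and $X$ itself, so $X_{\mathfrak{D}_{A_5}}=B$ while $\mathrm{O}_{\{A_5\}}(X)=X$. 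The precise point where your argument breaks is the appeal to Schreier's conjecture: solubility of $\mathrm{Out}(J)$ for simple $J$ does \emph{not} give solubility of $\mathrm{Out}(E(X))$, since $\mathrm{Out}(J^n)\simeq\mathrm{Out}(J)\wr S_n$ contains a copy of $S_n$; in the example $X/E(X)\simeq A_5$ really does embed into $\mathrm{Out}(A_5^5)$. Consequently $(G/K)_{\mathfrak{D}_J}\neq\mathrm{O}_{\{J\}}(G/K)$ in general: the operator $\mathrm{O}_{\{J\}}$ computes the $E\mathfrak{D}_J$-radical, not the $\mathfrak{D}_J$-radical (exactly the distinction the paper maintains between the algorithms FRADICAL and EFRADICAL), so the hypothesis of Theorem~\ref{thm2} is left unverified and your proof does not go through as written.

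The correct description is that $(G/K)_{\mathfrak{D}_J}$ is the product of all minimal normal subgroups of $G/K$ of type $J$: a normal $\mathfrak{D}_J$-subgroup is semisimple, hence a product of minimal normal subgroups of $G/K$, each necessarily of type $J$. Computing this would require a socle algorithm, which is not among the tools listed in Theorem~\ref{Basic}. The paper sidesteps the issue entirely: it does not verify the hypothesis of Theorem~\ref{thm2} for arbitrary quotients, but instead observes that the \emph{proof} of Theorem~\ref{thm2} only ever needs the $f(J)$-radical of the particular groups $G/C_G(H/K)$ with $H/K$ a chief factor of type $J$. Such a group has a unique minimal normal subgroup, namely $HC_G(H/K)/C_G(H/K)\simeq H/K$, which therefore coincides with the whole $D_0(J)$-radical; hence $C_{G,f}(H/K)=HC_G(H/K)$ is the innerizer, computable by Theorem~\ref{Basic}, and the remainder of the proof of Theorem~\ref{thm2} applies verbatim. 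To repair your argument you must either establish polynomial-time computability of the socle together with its decomposition by type (so as to compute $\mathfrak{D}_J$-radicals of arbitrary groups), or restrict attention, as the paper does, to the quotients $G/C_G(H/K)$ that actually occur in the proof.
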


\begin{proof}
Recall that the generalized Fitting subgroup is an $\mathfrak{N}^*$-radical for a class $\mathfrak{N}^*$ of all quasinilpotent groups. This is class is a Baer-local formation defined by $h$ where $h(J)=1$ if $J$ is abelian and $h(J)=D_0(J)$ otherwise \cite[IX, Lemma 2.6]{Doerk1992}. Hence $C_{G, h}(H/K)=C_G(H/K)=HC_G(H/K)$ if $H/K$ is abelian. Note that if $H/K $ is non-abelian (of type $J$), then $G/C_G(H/K)$ has the unique minimal normal subgroup which is isomorphic to $H/K$ and hence coincides with $h(J)$-radical of $G/C_G(H/K)$. Thus $C_{G, h}(H/K)=HC_G(H/K)$  in this case. Note that $HC_G(H/K)$ can be computed in polynomial time by Theorem \ref{Basic}. Hence from the proof of Theorem \ref{thm2} it follows that $\mathrm{F}^*(G/K)$ of $G/K$ can be computed in polynomial time.\end{proof}

Recall that the nilpotent length $h(G)$ of a finite group $G$ is the least number $h$ such that $\mathrm{F}_h(G) = G$, where $\mathrm{F}_{(0)}(G) = 1$, and $\mathrm{F}_{(i+1)}(G)$ is the inverse image of $\mathrm{F}(G/\mathrm{F}_{(i)} (G))$. Note that the nilpotent length is defined only for soluble groups.
The $p$-length $l_p(G)$ of a finite group can be defined by $l_p(G)=0$ if $p\not\in\pi(G)$ and $l_p(G)=l_p(G/\mathrm{O}_{p',p}(G))+1$ otherwise. Note that the $p$-length is defined only for $p$-soluble groups.

\begin{defn}[{Khukhro, Shumyatsky \cite{Khukhro2015a, Khukhro2015b}}]
  $(1)$ The generalized Fitting height $h^*(G)$ of a finite group $G$ is the least number $h$ such that $\mathrm{F}_h^* (G) = G$, where $\mathrm{F}_{(0)}^* (G) = 1$, and $\mathrm{F}_{(i+1)}^*(G)$ is the inverse image of the generalized Fitting subgroup $\mathrm{F}^*(G/\mathrm{F}^*_{(i)} (G))$.

  $(2)$  Let $p$ be a prime, $1=G_0\leq G_1\leq\dots\leq G_{2h+1}=G$ be the shortest normal series in which for $i$ odd the factor $G_{i+1}/G_i$ is $p$-soluble $($possibly trivial$)$,
and for $i$ even the factor $G_{i+1}/G_i$ is a $($non-empty$)$ direct product of nonabelian simple
groups. Then $h=\lambda_p(G)$ is called the non-$p$-soluble length of a group $G$.

$(3)$  $\lambda_2(G)=\lambda(G)$ is the nonsoluble length of a group $G$.
\end{defn}

We use $R_p(G)$ to denote the $p$-soluble radical of a group $G$.  Let ${\mathrm{\overline{F}}^*_p}(G)$ be  the inverse image of $\mathrm{F}^*(G/R_p(G))$. Note that from \cite[Lemma 2.7]{Murashka2023} and $R_p(G/R_p(G))\simeq 1$ it follows that
 $\lambda_p(G)$   be defined by $\lambda_p(G)=0$ if $G$ is $p$-soluble and $
 \lambda_p(G)=\lambda_p(G/{\mathrm{\overline{F}}^*_p}(G))+1$ otherwise.

\begin{thm}
  If   $K\leq G\leq S_n$ and $p$ is a prime, then $h(G/K)$, $l_p(G/K)$, $h^*(G/K)$, $\lambda_p(G/K)$, $\lambda(G/K)$ can be computed in a polynomial time.
\end{thm}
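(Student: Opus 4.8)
The plan is to treat all five quantities uniformly: each is defined by iterating a single ``radical'' operation and counting how many steps are needed to exhaust $G/K$. So I would reduce the theorem to two facts: (i) each one-step operation is computable in polynomial time, and (ii) the number of iterations is bounded by a polynomial in $n$. Fact (ii) is what makes the whole scheme polynomial, and it comes from Lemma~\ref{chain}: the successive terms of each of these characteristic series form a strictly ascending chain of subgroups of $G$, so there are at most $2n-3$ of them.

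First I would dispose of the one-step operations. As recalled in the introduction, $\mathrm{F}(G/K)$ and $\mathrm{O}_{p',p}(G/K)$ are, respectively, the intersection of the centralizers of \emph{all} chief factors and of the chief factors of order divisible by $p$; computing a chief series of $G/K$ by \ref{i5} of Theorem~\ref{Basic}, the individual centralizers by \ref{i1}, and the required orders and types by \ref{i3.5} and \ref{i3}, one obtains both radicals in polynomial time. The generalized Fitting subgroup $\mathrm{F}^*(G/K)$ is handled by Theorem~\ref{star}. For the $p$-soluble radical I would note that $R_p(G/K)=\mathrm{O}_\Sigma(G/K)$, where $\Sigma$ is the class of simple groups $J$ with $J\cong C_p$ or $p\nmid|J|$; since membership in $\Sigma$ is decidable from the name of a chief factor (\ref{i3.5}), part \ref{i4} of Theorem~\ref{Basic} computes $R_p(G/K)$ in polynomial time. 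Consequently $\overline{\mathrm{F}}^*_p(G/K)$, the inverse image of $\mathrm{F}^*\big((G/K)/R_p(G/K)\big)$, is computable in polynomial time by composing the computation of $R_p$ with Theorem~\ref{star}. The same tools also decide whether $G/K$ is soluble (all chief factors abelian) or $p$-soluble ($R_p(G/K)=G/K$), which is what makes $h$ and $l_p$ well defined.

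Next I would run the iterations. For the nilpotent length set $F_0=K$ and $F_{i+1}/F_i=\mathrm{F}(G/F_i)$; for a nontrivial soluble quotient the Fitting subgroup is nontrivial, so $F_0\lneq F_1\lneq\cdots$ is a strictly ascending chain reaching $G$ after exactly $h(G/K)$ steps. By Lemma~\ref{chain} this forces $h(G/K)\le 2n-3$, so only $O(n)$ Fitting computations are performed, each polynomial, giving $h(G/K)$ in polynomial time. The remaining lengths use the same scheme with $\mathrm{F}$ replaced by $\mathrm{O}_{p',p}$ (for $l_p$, using that a nontrivial $p$-soluble group of order divisible by $p$ has nontrivial $\mathrm{O}_{p',p}$), by $\mathrm{F}^*$ (for $h^*$, using $\mathrm{F}^*(H)\ne1$ for $H\ne1$), and by $\overline{\mathrm{F}}^*_p$ (for $\lambda_p$, using that $\overline{\mathrm{F}}^*_p(H)\gneq R_p(H)$, hence $\overline{\mathrm{F}}^*_p(H)\ne1$, whenever $H$ is not $p$-soluble); here the recursion $\lambda_p(G)=\lambda_p(G/\overline{\mathrm{F}}^*_p(G))+1$ quoted from \cite{Murashka2023} guarantees that the step count equals $\lambda_p$. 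Finally $\lambda(G/K)=\lambda_2(G/K)$ is the case $p=2$. In every case the series is strictly ascending, so Lemma~\ref{chain} caps the number of steps at $O(n)$.

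I expect the only genuine obstacle to be bookkeeping rather than mathematics: one must verify in each case that the chosen operation is strictly increasing on the relevant class of quotients, so that Lemma~\ref{chain} really applies and the iteration cannot run more than $O(n)$ times — this, and not the computation of any single radical, is what keeps the algorithm polynomial. The mildly composite step is $\overline{\mathrm{F}}^*_p$, which chains $R_p$ (via \ref{i4}) with $\mathrm{F}^*$ (via Theorem~\ref{star}) and relies on the characterization of $\lambda_p$ through its one-step recursion; but once that recursion is granted, the non-$p$-soluble and nonsoluble lengths are handled exactly like the generalized Fitting height.
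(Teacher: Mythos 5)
Your proposal is correct and its overall architecture coincides with the paper's: compute each one-step radical in polynomial time, then iterate until the defining recursion terminates (the paper packages this as the algorithm FLENGTH with Condition(1) and Condition(2)). Within that common scheme you diverge in three places. First, for $\mathrm{F}(G/K)$ and $\mathrm{O}_{p',p}(G/K)$ the paper routes through Theorem~\ref{thm2}, realizing the nilpotent and $p$-nilpotent groups as local formations ($f_1(q)=1$ for all $q$; $f_2(p)=1$, $f_2(q)=\mathfrak{G}$ for $q\neq p$), whereas you compute them directly as intersections of centralizers of the factors of one chief series via parts~\ref{i5} and~\ref{i1} of Theorem~\ref{Basic}; this is more elementary and bypasses the main theorem entirely, though it quietly uses the standard fact that the intersection over a single chief series already equals the radical (Jordan--H\"{o}lder plus nilpotency of stability groups), which the paper's route gets for free from its formation machinery. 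Second, you justify the complexity of the iteration explicitly: you check that each operator is strictly increasing on the relevant nontrivial quotients and then cap the number of steps at $2n-3$ by Lemma~\ref{chain}. The paper never states this bound, so your version supplies a step that the paper's proof actually needs for the phrase ``polynomial time'' to apply to the whole recursion, not just to one step. Third, your stopping rule for $l_p$ (iterate while the order of the quotient is divisible by $p$) is the correct reading of the recursive definition $l_p(G)=0$ iff $p\notin\pi(G)$; the paper's Condition(2) for $l_p$ is stated as ``is a $p$-group'', which is a slip --- a nontrivial $p$-group has $l_p=1$, so the condition should read ``is a $p'$-group'' --- and your treatment avoids it.
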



\begin{proof}
 Assume that the $\mathfrak{F}$-radical exists in every group from a given homomorph $\mathfrak{H}$ and $(G/K)_\mathfrak{F}$ can be computed in a polynomial time for every $K\trianglelefteq G\leq S_n$ with $G/K\in\mathfrak{H}$. Moreover assume that Condition(1) and Condition(2) when $G/K\not\in\mathfrak{H}$ and $\mathfrak{F}$-length of $G/K$ is equal to 0 respectively can be checked in polynomial time. Then  the $\mathfrak{F}$-length can be computed in polynomial time  by the  algorithm ``FLENGTH''.

Note that $R_p(G/K)$ can be computed in polynomial time by \ref{i4} of Theorem \ref{Basic}. Therefore $\mathrm{F}^*(G/K)$ and ${\mathrm{\overline{F}}^*_p}(G/K)$ can be computed in polynomial time by Theorem \ref{star}.

Recall that the classes of all nilpotent and $p$-nilpotent groups can be locally defined by $f_1$ and $f_2$ respectively where $f_1(q)=1$ for all prime  $q$ and $f_2(p)=1$ and $f_2(q)=\mathfrak{G}$ for all prime $q\neq p$. Hence $\mathrm{F}(G/K)$ and $\mathrm{O}_{p',p}(G/K)$ can be computed in polynomial time by Theorem \ref{thm2}.

For $h$ Condition(1) is just ``is not soluble'', for  $l_p $  Condition(1) is just ``is not $p$-soluble'', for $h^*$ and $ \lambda_p$ we don't need to check Condition(1).
For $h$ and $h^*$ Condition(2) becomes ``is the unit group'', for $l_p$ it becomes ``is a  $p$-group'', for $\lambda_p$ it becomes ``is a  $p$-soluble group''. With the help of Theorem \ref{Basic} we can check this conditions in polynomial time.
 \end{proof}

\begin{algorithm}[H]
\caption{FLENGTH$(G, K, \mathfrak{F})$}
\SetAlgoLined
\KwResult{$\mathfrak{F}$ of $G/K$.}
\KwData{A normal subgroup $K$ of a  group $G$, $\mathfrak{F}$.}

\If{Condition(1)}{\Return{$\infty$}\;}
\eIf{Condition(2)}{\Return{0}\;}
{$T/K\gets (G/K)_\mathfrak{F}$\;
  \Return{FLENGTH$(G, T, \mathfrak{F})$+1}\;}
\end{algorithm}

\subsection*{Acknowledgments}

This work is supported by BFFR $\Phi23\textrm{PH}\Phi\textrm{-}237$.\\
I am grateful to A.\,F. Vasil'ev for helpful discussions.

{\small\bibliographystyle{siam}
\bibliography{Alg3}}

\end{document}